\numberwithin{equation}{section}
\newtheorem{theoreme}{Theorem}[section]
\newtheorem{proposition}[theoreme]{Proposition}
\newtheorem{lemme}[theoreme]{Lemma}
\newenvironment{proof}[1][Proof]{\noindent \textbf{#1.}~ }
{\hfill\rule{2mm}{2mm} \vspace{\parskip} }
\newcommand{\R}{\ensuremath{\mathbb R}}
\newcommand{\E}{\ensuremath{\mathbb E}}
\providecommand{\prob}{\mathsf{P}}
\newcommand{\QQ}{\ensuremath{\mathbb Q}}
\newcommand{\N}{\ensuremath{\mathbb N}}
\newcommand{\val}{\mathsf{val}}
\newcommand{\bit}{\mathsf{bit}}
\newcommand{\ii}{\ensuremath{\mathbf i}}
\newcommand{\jj}{\ensuremath{\mathbf j}}
\newcommand{\Id}{\ensuremath{\operatorname{Id}}}
\newcommand{\De}{\Delta}
\newcommand{\la}{\lambda}
\newcommand{\ep}{\varepsilon}
\newcommand{\al}{\alpha}
\newcommand{\be}{\beta}
\newcommand{\si}{\sigma}
\newcommand{\ga}{\gamma}
\definecolor{darkblue}{rgb}{0,0,0.7} 
\title{New algorithms for solving stochastic games} 
\author{Miquel Oliu-Barton\footnote{Universit\'e Paris-Dauphine, PSL Research University, CNRS, CEREMADE, Paris, France.}}
\date{\today} 
\begin{document}
 \maketitle

\abstract{Stochastic games are a classical model in game theory in which two opponents interact and the environment changes in response to the players' behavior. The central solution concepts for these games are the discounted values and the value, which represent what playing the game is worth to the players for different levels of impatience.  In the present manuscript, we provide algorithms for computing exact expressions for the discounted values and for the value, which are polynomial in the number
of pure stationary strategies of the players. This result considerably improves all the existing algorithms, including the most efficient one, due to Hansen, Kouck\'y, Lauritzen, Miltersen and Tsigaridas (STOC 2011).} 

\setcounter{tocdepth}{2}
\tableofcontents

\section{Introduction}\label{intro}
\subsection{Motivation}
Concurrent stochastic games, henceforth stochastic games, were introduced by Shapley \cite{shapley53} in 1953 in order to model the dynamic interaction between two opponents. The theory of stochastic games and its applications have been studied in several scientific disciplines, including economics, operations research, evolutionary biology, and computer science. In addition, mathematical tools that were used and developed in the study of stochastic games are used by mathematicians and computer scientists in other fields. Stochastic games generalize matrix games and Markov decision problems; they 
are played over a finite set of states, and to each state corresponds a matrix game. 
Stochastic games are played in stages. 
At each stage $m\geq 1$, a stage reward $g_m$ is produced, which depends on the current state $k_m$, commonly observed by the players, and on the current pair of actions $(i_m,j_m)$ chosen by the players. 
The game is zero-sum, in the sense that Player~1 receives  $g_m$, while Player~2 receives $-g_m$. 
%
A $\la$-\emph{discounted stochastic game} is one where Player $1$ maximizes the expectation of the normalized $\la$-discounted sum $\sum_{m\geq 1}\la(1-\la)^{m-1} g_m$ for some discount rate $\la\in (0,1]$, while Player~2 minimizes the same amount. The case where the discount rate is close to $0$ is of particular importance, as it stands for the case where the players are patient. 
Alternatively, the interaction between patient players can be modeled by an \emph{undiscounted 
stochastic game}, that is: one in which Player $1$ maximizes the expectation of 
$\liminf_{T\to+\infty}\frac{1}{T}\sum\nolimits_{m=1}^T g_m$,
while Player~2 minimizes the same amount.

The central solution concept for zero-sum games is its {value}. When it exists, the value is the maximal amount that each player can obtain in expectation regardless of her opponent's behavior. 
The value of the $\la$-discounted stochastic game is often referred to as its $\la$-\emph{discounted value}, while the value of the undiscounted stochastic game is referred to as its \emph{value}. 
  In the present manuscript, we propose new algorithms for computing the discounted values and the value of any stochastic game, which are 
 %
 polynomial in the number of 
 pure stationary strategies of the game, that is: strategies that depend only on the current state. More precisely, for a stochastic game with $n$ states and $p$ actions available at each state, we provide explicit bounds which are polynomial in $p^n$. 
These results considerably improve 
all prior algorithms for computing the discounted values and the value of a stochastic game. In particular, they improve the best of them, due to Hansen, Kouck\'y, Lauritzen, Miltersen and Tsigaridas  \cite{HKMT11}, where the bounds are polynomial in $p$ and $2^{n^{n^2}}$. 

\paragraph{Notation.} In the sequel, we denote by $K=\{1,\dots,n\}$ the set of states, for some $n\in \N$. 
For any initial state $1\leq k\leq n$ any discount rate $\la\in(0,1]$, the $\la$-discounted value and the value of the stochastic game starting at $k$ are denoted, respectively, by $v_\la^k$ and $v^k$. We also set $v_\la:=(v^1_\la,\dots,v_\la^n)\in \R^n$ and $v:=(v^1,\dots,v^n)\in \R^n$.


\subsection{State of the art}\label{sofa}
In his seminal paper, Shapley \cite{shapley53} defined stochastic 
and proved that these games have a $\la$-discounted value for each $\la \in(0,1]$, and that both players have optimal stationary strategies, that is, strategies that depend only on the current state. Furthermore, 
a characterization was obtained for the vector of values $v_\la$, as the unique fixed point of an operator from $\R^n$ to $\R^n$ which is contracting for the $L^\infty$ norm. 
Blackwell and Ferguson \cite{BF68} considered a particular stochastic game, the so-called ``Big Match'', and proved that the existence of the value and the equality to $\lim_{\la\to 0}v_\la=v$. Their result was then extended by Kohlberg \cite{kohlberg74} to the class of absorbing games, that is, a class of stochastic games in which there is at most one transition between states. 
For general stochastic games, the convergence of the discounted values, as $\la$ goes to 0, was proved by Bewley and Kohlberg \cite{BK76}, building on Shapley's characterization of the discounted values and on Tarski-Seidenberg elimination theorem from semi-algebraic geometry. The existence of the value and the equality $v=\lim_{\la\to 0}v_\la$, were proved by Mertens and Neyman \cite{MN81}. 
An explicit characterization of the value was recently obtained by Attia and Oliu-Barton \cite{AOB18a}. 

Whether the value of a finite stochastic game can be computed in polynomial time is a famous open problem in computer science. This problem is intriguing because the simpler class of \emph{simple stochastic games} is both NP and co-NP,
and several famous problems with this property have  eventually been shown to be polynomial-time solvable, such
as primality testing or linear programming. (A simple stochastic game is one where, for each state, the transition function depends on one player's action only.) 
The known algorithms fall into two categories: 
decision procedures for the first order theory of the reals, such as 
\cite{chatterjee08, etessami2006, SV10}, and value or
strategy iteration methods, such as \cite{chatterjee2006,rao73}. All of them are worst-case exponential in the number of states or in the number of actions.  Hansen, Ibsen-Jensen and Miltersen  \cite{hansen2011} proved that no value or strategy iteration algorithm can ever achieve a polynomial bound. 
Recently, Hansen, Kouck\'y, Lauritzen, Miltersen and
Tsigaridas \cite{HKMT11} obtained a remarkable improvement using the machinery of real-algebraic geometry in a more indirect manner: they provided an algorithm which, for any fixed number of states, is polynomial in the number
of actions. However,
the dependence on the number of states is an implicit double exponential expression, which is problematic in terms of practical computations. In their own words (page 3 of  \cite{HKMT11}): ``\emph{the exponent in the polynomial time bound is $O(n)^{n^2}$, i.e., the complexity is doubly exponential in $n$}'', from which they claim that ``\emph{getting a better dependence on $n$ is a very interesting open problem}''.

\subsection{Main results}
In the present paper, we propose a new method for computing the $\la$-discounted value and the value of a stochastic game.  Unlike all prior works, we build on the new characterizations that were obtained by Attia and Oliu-Barton \cite{AOB18a}. 
Our algorithms are polynomial in the number of actions, for any \emph{fixed} number of states, but the dependence on the number of states is explicit and simply exponential. Equivalently, our algorithms are polynomial in the number of 
{pure stationary strategies}, that is, strategies that depend only on the current state. 
This improvement opens up the path for actually solving stochastic games in practice. An important ingredient in our work is the following \emph{continuity result}: for any $\ep>0$, we provide an explicit discount rate $\la_\ep\in(0,1]$ whose bit-size is 
is polynomial in the number of 
{pure stationary strategies} and in $\log \ep$, and so that 
$|v^k_\la-v^k|\leq \ep$ for all $\la\in(0,\la_\ep)$ and $1\leq k\leq n$. 

\subsection{Organisation of the paper} In Section~\ref{sec2} we provide a formal description of the model of stochastic games (Section~\ref{sec_mod}), we present our main results (Section~\ref{sec_main}), and we gather some results which are relevant for the sequel (Section~\ref{sec_res}). 
In Section~\ref{res1} we establish the above-mentioned continuity result together with some algebraic properties for the discounted values and the value of a stochastic game. 
In Section~4 we describe our new algorithms and establish Theorems 1 and 2. More precisely, Section~\ref{sec3} is devoted to the proof of Theorem~1, while Section~\ref{sec4} is devoted to the proof of Theorem~2. In both cases, we describe and analyze two algorithms, one which outputs arbitrarily close approximations of the desired value, namely $v^k_\la$ and $v^k$, respectively, and one which outputs these values exactly. 
%

\section{Stochastic games}\label{sec2}
We now introduce the model of stochastic games, and some basic facts. 
For a more detailed presentation of stochastic games, see for instance Sorin \cite[Chapter 5]{sorin02} and Renault \cite{renaultnotes2}. 
\subsection{Model and notation}\label{sec_mod}
We start by introducing some notation that will be used throughout the paper. \begin{itemize}
\item For each finite set $E$, we denote its cardinality by $|E|$ and the set of probability distributions over $E$ by $\De(E)=\{f:E\to [0,1],\ \sum_{e\in E}f(e)=1\}$.
\item We denote by $n$ the number of states.
\item $I^1,\dots,I^n$ and $J^1,\dots,J^n$ denote $2n$ fixed finite sets of actions.
\item We set $I:=I^1\times\dots\times I^n$ and $J:=J^1\times\dots\times J^n$.
\item We set $d:=\min(|I|,|J|)$. 
\item We set $X:=\De(I^1)\times \dots\times \De(I^n)$ and $Y:=\De(J^1)\times \dots\times \De(J^n)$.
\item For any $\al\in \R$, $\lceil \al \rceil$ denotes the unique integer satisfying $\al \leq \lceil \al \rceil< \al+1$.
\item For any $p\in \N$ we denote its bit-size by $\bit(p):=\lceil \log_2 (p+1)\rceil$.
\item For $(p,q)\in \N^2$, we set $\bit(p/q)=\bit(p)+\bit(q)$. 
\item For any tuplet of nonnegative integers $(a,b,c,M)$ we define 
\begin{eqnarray*} \varphi(a,b,c,M)&:=& 
 ab(\bit(a) + \bit(b)+\bit(c)+\bit(M)) \,. \end{eqnarray*}
In particular, for any $N\in \N$,  one has $\varphi(n,d,N,0)= nd (\bit(n) + \bit(d)+\bit(N))$. 
\end{itemize} 

\bigskip 
\noindent We can now describe the classical model of stochastic games, as in Shapley \cite{shapley53}.

\paragraph{Model.} 
A \emph{stochastic game} is described by a tuple $(K,I,J,g,q,k)$, where
\begin{itemize}
\item $K=\{1,\dots,n\}$ is a finite set of states. 
\item For each $1\leq \ell\leq n$, $I^\ell$ and $J^\ell$ are the sets of available actions for Player~1 and 2, respectively, at state $\ell$. 
\item $g:Z \to \R$ is the reward function, where $Z:=\{(\ell,i,j)\,|\, \ell\in K,\ (i,j)\in I^\ell\times J^\ell\}$. 
\item $q:Z\to \De(K)$ is the transition function. 
\item $1\leq k\leq n$ is the initial state.
\end{itemize}
The game proceeds in stages as follows. At each stage $m\geq 1$, both players are informed of the current state $k_m\in K$. Then, independently, Player~1 chooses an action $i_m\in I^{k_m}$ and Player~2 chooses an action $j_m\in J^{k_m}$. The pair $(i_m,j_m)$ is then observed by the players, from which they can infer the stage reward 
$g_m:=g(k_m,i_m,j_m)$. A 
 new state $k_{m+1}$ is then chosen  
with the probability distribution $q(k_m,i_m,j_m)$, and the game proceeds to stage $m+1$.\\

\noindent A \emph{$\la$-discounted stochastic game} is one where Player $1$ maximizes the expectation of  $\sum_{m\geq 1} \la(1-\la)^{m-1}g_m$ while Player $2$ minimizes the same amount, for some $\la\in(0,1]$.\\ 

\noindent An \emph{undiscounted stochastic game} is one where Player $1$ maximizes the expectation of  
$\liminf_{T\to +\infty} \frac{1}{T} \sum_{m=1}^T g_m$, while Player~2 minimizes the same amount.




\paragraph{Strategies.} 
A (behavioral) \emph{strategy} is a decision rule from the set of possible observations to the set of probabilities over the set of available actions. 
For every stage $m\geq 1$, the set of possible observations at stage $m$ is $Z^{m-1}\times K$. A strategy for Player~1 is thus a sequence of mappings $\si=(\si_m)_{m\geq 1}$ so that $\si_m(h_m,k_m)\in \De(I^{k_m})$ for all $(h_m,k_m)\in Z^{m-1}\times K$. 
%
Similarly, a strategy for Player~2 is a sequence of mappings $\tau=(\tau_m)_{m\geq 1}$ so that $\tau_m(h_m,k_m)\in \De(J^{k_m})$ for all $(h_m,k_m)\in Z^{m-1}\times K$. 
 Both players choose their strategies independently. The sets of strategies are denoted, respectively, by $\Sigma$ and $\mathcal{T}$. 
By the Kolmogorov extension theorem, the initial state $k$, the transition function $q$, and a pair of strategies $(\si,\tau)$ induce a unique probability over the set of plays $Z^{\N}$, endowed with the sigma-algebra generated by the 
cylinders corresponding to finite histories, i.e. the sets $(z_1,\dots,z_p)\times Z^\N$ for every $p\in \N$ and $(z_1,\dots,z_p)\in Z^p$. 
This probability is denoted by $\prob_{\si,\tau}^k$, and $\E_{\si,\tau}^k$ denotes the expectation with respect to $\prob_{\si,\tau}^k$.

\paragraph{Stationary strategies.} A \emph{stationary strategy} is one that depends on the past observations only through the current state. A stationary strategy of Player~1, denoted by $x=(x^1,\dots,x^n)$, is thus an element of $X$.
 Similarly, $y=(y^1,\dots,y^n)\in Y$ is a stationary strategy of Player~2. 
The sets $I$ and $J$ are the sets of \emph{pure stationary strategies}. We use 
the notation $\ii=(\ii^1,\dots,\ii^n)\in I$ and $\jj=(\jj^1,\dots,\jj^n)\in J$. 

\paragraph{Discounted and undiscounted payoffs.} 
To every pair $(\si,\tau)\in \Sigma\times \mathcal{T}$ corresponds a $\la$-discounted payoff for each discount rate $\la\in(0,1]$, and an undiscounted payoff, in the game $(K,I,J,g,q,k)$. They are given by 
\begin{eqnarray*}\label{payoff}\ga_\la^k(\si,\tau)&:=&\E_{\si,\tau}^k \left[\sum\nolimits_{m\geq 1}\la(1-\la)^{m-1} g_m\right], \\ 
\ga^k(\si,\tau)&:=&\E_{\si,\tau}^k \left[\liminf_{T\to+\infty}\frac{1}{T}\sum\nolimits_{m=1}^T g_m\right]\,. 
\end{eqnarray*}

\paragraph{The discounted values and the value.} For each discount rate $\la\in (0,1]$, the $\la$-discounted stochastic game  $(K,I,J,g,q,k)$ has a value, denoted by $v^k_\la$, whenever 
\[v^k_\la= \sup_{\si \in \Sigma} \inf_{\tau\in \mathcal T} \ga_\la^k(\si,\tau)=\inf_{\tau \in \mathcal T} \sup_{\si\in \Sigma} \ga_\la^k(\si,\tau)\,.\]
Similarly, the undiscounted stochastic game $(K,I,J,g,q,k)$ has a value, denoted by $v^k$,  whenever 
\[v^k=\sup_{\si \in \Sigma} \inf_{\tau\in \mathcal T} \ga^k(\si,\tau)=\inf_{\tau \in \mathcal T} \sup_{\si\in \Sigma} \ga^k(\si,\tau)\,.\]
\paragraph{Classical results.} The existence of $v_\la^k$ is due to Shapley \cite{shapley53}, while Mertens and Neyman \cite{MN81} proved the existence of $v^k$ and the equality $v^k=\lim_{\la\to 0} v^k_\la$. \\

In the sequel, we will refer to $v^k_\la$ and to $v^k$ as the $\la$-discounted value and the value, respectively, of the stochastic game $(K,I,J,g,q,k)$.







 
 \subsection{Main results}\label{sec_main} 
 In the sequel, we consider stochastic games which can be described with rational data. For any $N\in\N$, we say that the stochastic game $(K,I,J,g,q,k)$ satisfies $(H_N)$ if $g(\ell,i,j)$ and $q(\ell'\,|\,\ell,i,j)$ belong to the set $\{0,\frac{1}{N},\frac{2}{N},\dots,1\}$ for all $(\ell,i,j)\in Z$ and $1\leq \ell'\leq n$. Recall that $n=|K|$ is the number of states.\\
 
The main contributions of the present paper concern the computation of the discounted value $v^k_\la$ and the value $v^k$ of a stochastic game satisfying $(H_N)$. These numbers are known to be \emph{algebraic}, that is there exists polynomials $P$ and $Q$ with integer coefficients and so that $P(v^k_\la)=0$ and $Q(v^k)=0$. For an algebraic number $\al\in \R$, an \emph{exact expression for $\al$} is a triplet $(P; a,b)$ where $P$ is a polynomial with integer coefficients, 
$(a,b)$ is a pair of rational numbers, and $\al$ is the unique root of $P$ in the interval $(a,b)$. Thus, for instance, $(z^2-2; 1,2)$ is an exact expression for $\sqrt{2}$.

The complexity of the algorithms presented in this paper will be measured with the so-called \emph{logarithmic cost model} which consists in assigning, to every arithmetic operation, a cost that is proportional to the number of bits involved. An algorithm is \emph{polynomial in the variables $t_1,\dots,t_m$}, if its logarithmic cost can be bounded by a polynomial expression of $t_1,\dots,t_m$. \\

We can now state our results formally.  
\paragraph{Theorem~1.} \emph{There exists an algorithm that takes as input a stochastic game $(K,I,J,g,q,k)$ satisfying $(H_N)$ for some $N\in \N$ and 
a discount rate satisfying $\la\in \{0,\frac{1}{M},\frac{2}{M},\dots,1\}$ for some $M\in \N$, and outputs an exact expression for  its discounted value $v^k_\la$. The algorithm is polynomial in $n$, $|I|$, $|J|$, $\log N$ and $\log M$}. 

\paragraph{Theorem~2.} \emph{There exists an algorithm that takes as input a stochastic game $(K,I,J,g,q,k)$ satisfying $(H_N)$ for some $N\in \N$ and outputs an exact expression for its value $v^k$. The algorithm is polynomial in $n$, $|I|$, $|J|$ and $\log N$}.\\[0.2cm]

The algorithms that are mentioned in Theorems 1 and 2 are provided in Sections \ref{sec3} and \ref{sec4} respectively. Though very similar, the second algorithm has an additional ingredient, namely a new bound on 
how small the discount rate needs to be so that $v^k_\la$ and $v^k$ are close to each other. This result, which has an interest in its own, can be formalized as follows. 

\paragraph{Theorem~3.} \emph{For each $r\in \N$, set $\la_r:=2^{-4 nd(\bit(n)+\bit(d)+\bit(N))-rnd}$. Then, for any stochastic game $(K,I,J,g,q,k)$ satisfying $(H_N)$ one has  
$$\left| v_{\la}^k - v^k \right|\leq 2^{-r}\qquad \forall \la\in(0,\la_r]\,.$$}
\paragraph{Comments.} 
The previous results deserve some comments. For simplicity, we assume that, for some $p\in \N$, one has $|I^\ell|=|J^\ell|=p$  and for all $1\leq \ell\leq n$. Hence, in this case $|I|=|J|=d=p^n$.  
\begin{enumerate} 
\item Expressions that are polynomial in $|I|$ and $|J|$ are in fact exponential in $n$. In other words, the algorithms mentioned in Theorem~1 and 2 are not polynomial in the number of actions, $\sum_{\ell=1}^n |I^\ell|$ and $\sum_{\ell=1}^n |J^\ell|$ but rather in the number of pure stationary strategies, $\prod_{\ell=1}^n |I^\ell|$ and 
$\prod_{\ell=1}^n |J^\ell|$. Similarly, the  bit-size of $\la_r$ in Theorem~3 is {exponential} in $n$. 
\item Theorems 1 and 2 improve the algorithms provided by Hansen et al. \cite{HKMT11}. Our main achievement is two-fold: one the one hand, we reduce the dependence on $n$ from a double exponential to a simple exponential; on the other, our algorithms are considerably simpler and more direct. 
\item For the $\la_r$ in Theorem~3 one has $\bit(\la_r)=O(nd(r+ \bit(n)+\bit(d)+\bit(N))$, which is of order $p^n$. 
This result improves Proposition 22 of Hansen et al. \cite{HKMT11}, where an expression for the order of the bit-size of $\la_r$ is obtained in terms of big O's, namely, of order $p^{O(n^2)}$. 
Furthermore, the reduction from $p^{O(n^2)}$ to $p^n$ is fairly tight. Indeed, transposing Theorem 8 of \cite{hansen2011} into the discounted case, it follows that one can construct an example for which a discount rate $\la$ of bit-size $p^{n/2}$ is not enough to ensure that $v_\la^k$ and $v^k$ are close to each other. 
\end{enumerate}



\subsection{Selected past results} \label{sec_res}
We now gather some past results that will be used in our proofs. 
We start by defining the auxiliary matrices that were introduced in Attia and Oliu-Barton \cite{AOB18a}. Consider the play induced by a pair $(\ii,\jj)\in I\times J$ of {pure stationary strategies}. Every time that the state $1\leq \ell\leq n$ is reached, the players play $(\ii^\ell,\jj^\ell)\in I^\ell\times J^\ell$, so that the stage reward is $g(\ell,\ii^\ell,\jj^\ell)$ and the law of the next state is given by $q(\ell,\ii^\ell,\jj^\ell)$. 
Hence, the state variable follows a Markov chain with transition matrix $Q(\ii,\jj)\in \R^{n\times n}$ and the stage rewards can be described by a vector $g(\ii,\jj)\in \R^n$.
For any $\la\in(0,1]$, let
$\ga_\la(\ii,\jj):=(\ga^1_\la(\ii,\jj),\dots \ga^n_\la(\ii,\jj)) \in \R^n$ be the vector of expected payoffs in the $\la$-discounted game, as the initial state varies from $1$ to $n$.
By stationarity, $Q(\ii,\jj), g(\ii,\jj)$ and $\ga_\la(\ii,\jj)$ satisfy the recursive relation 
\begin{equation*}\label{qut}\ga_\la(\ii,\jj)=\la g(\ii,\jj) + (1-\la)Q(\ii,\jj) \ga_\la(\ii,\jj)\,.\end{equation*}
The matrix $\Id-(1-\la)Q(\ii,\jj)$ is invertible so that, 
by Cramer's rule, one has
\begin{equation}\label{qut2}\ga^k_\la(\ii,\jj)=\frac{d^k_\la(\ii,\jj)} {d^0_\la(\ii,\jj)},\end{equation}
where  $d^0_\la(\ii,\jj):=\det(\Id - (1-\la)Q(\ii,\jj))\neq 0$ 
and where $d^k_\la(\ii,\jj)$ is the determinant of the $n\times n$-matrix obtained by replacing the $k$-th column of $\Id - (1-\la)Q(\ii,\jj)$ with $\la g(\ii,\jj)$.  
\paragraph{The auxiliary matrix of \cite{AOB18a}.} The auxiliary matrix $W^k_\la(z)$ is obtained by linearizing the quotient in \eqref{qut2} with an auxiliary variable $z\in \R$. Formally, for any $z\in \R$, one defines  the $|I|\times |J|$ matrix $W^k_\la(z)$ by setting 
\begin{equation*}W^k_\la(z)[\ii,\jj]:=d^k_\la(\ii,\jj) - z d^0_\la(\ii,\jj),\quad \forall (\ii,\jj)\in I\times J\,.\end{equation*} 
\noindent Its value is denoted by $\val\, W^k_\la(z)$. 




\bigskip 

The following two results, which are the main object of \cite{AOB18a}, will be crucial in the sequel.
\begin{theoreme}\label{thm_AOB1} 
For any $\la\in(0,1]$, $v^k_\la$ is the unique $z\in \R$ so that $\val \, W_\la^k(z) =0$. Furthermore, the map $z\mapsto \val \, W_\la^k(z)$ is strictly decreasing. 
\end{theoreme}

\begin{theoreme}\label{thm_AOB2} 
$F^k(z):=\lim_{\la\to 0} \la^{-n}\,  \val\, W^k_\la(z)$ exists in $\R\cup\{\pm\infty\}$ for all $z\in \R$, and $v_\la^k$ converges, as $\la$ goes to 0, to the unique $w\in \R$ so that $z> w \Rightarrow  F^k(z)<0$ and $z< w  \Rightarrow  F^k(z)>0$. Furthermore, the map $z\mapsto F^k(z)$ is strictly decreasing. \end{theoreme}


The third result is contained in Theorem~2 of Shapley and Snow \cite{SS50}. 
For any matrix $M$ of size $p\times p$, we denote by 
$S(M)$ the sum of the entries of the adjugate matrix of $M$, with the convention $S(M)=1$ if $p=1$ (i.e. when the adjugate matrix is not defined). 
\begin{theoreme}\label{thm_ss} For any matrix $M$ of size $p\times q$, there exists a square sub-matrix of $M$, denoted by $\dot{M}$, so that $S(\dot{M})\neq 0$ and $\val\, M = \frac{\det \dot{M}}{S(\dot{M})}$. 
\end{theoreme} 

 \section{Algebraic properties of the values}\label{res1} 
Throughout this section, $(K,I,J,g,q,k)$ denotes a stochastic games satisfying $(H_N)$ for some $N\in \N$. 
Recall that a real number $\al$ is \emph{algebraic of degree $p$} if there exists a polynomial $P$ with integer coefficients satisfying $P(\al)=0$, and $p$ is the lowest degree of all such polynomials. The \emph{defining polynomial} of $\al$ is the unique polynomial with integer coefficients $P(z)=a_0+a_1z+\dots+a_p z^p$ so that $P(\al)=0$, $a_p> 0$ and $\gcd(a_0,\dots,a_p)=1$. In Section~\ref{boundcoeffs}, we combine a technical result from Basu, Pollack and Roy \cite{basu07} and Theorems~\ref{thm_AOB1}, \ref{thm_AOB2} and \ref{thm_ss} to establish new bounds for the degree and the coefficients of the defining polynomials of $v_\la^k$ and $v^k$. 
These results will be used to analyze the algorithms corresponding to Theorems 1 and 2. 
In Section~\ref{thm3}, we establish Theorem~3, a result that  reduces the computation of the value of a stochastic game to the computation of its discounted value, for a well-chosen discount rate. This result will be used in the algorithm corresponding to Theorem~2. 

 %
\subsection{Bounds on the defining polynomials of the values}\label{bound_coeffs} \label{boundcoeffs}
We start by recalling Proposition 8.12 of Basu, Pollack and Roy \cite{basu07}. 

\begin{lemme}\label{basu2} Let $A$ be an $p\times p$-matrix with polynomial entries in the variables $Y_1,\dots,Y_\ell$ of degrees bounded by $q$ and integer coefficients of bit-size at most $\nu$. Then $\det A$, considered as a polynomial in $Y_1,\dots,Y_\ell$ has degrees in $Y_1,\dots,Y_\ell$ bounded by $pq_1,\dots,pq_\ell$, and coefficients of bit-size at most $p\nu+p\bit(p)+\ell \bit(pq+1)$ where $q=\max(q_1,\dots,q_\ell)$.
\end{lemme}

\noindent We can now use Lemma~\ref{basu2} and Theorem~\ref{thm_ss} to prove the following result. 
\begin{lemme}\label{petit_lem} There exists two finite sets, denoted by $\mathcal{P}$ and $\mathcal{Q}$, which contain nonzero polynomials in the variables $(\la,z)$ of degree at most $nd$ in $\la$ and $d$ in $z$ and integer coefficients, so that for each $(\la,z)\in (0,1]\times \R$, there exists $P\in \mathcal{P}$ and $Q\in \mathcal{Q}$ so that $\val\, W^k_\la(z)=P(\la,z)/Q(\la,z)$, $Q(\la,z)\neq 0$. Moreover, the coefficients of $P$ are of bit-size at most $3\, \varphi(n,d,N,0)$.  
\end{lemme}
\begin{proof}Let $(\ii,\jj)\in I\times J$ be fixed. By construction, $W^k_\la(z)[\ii,\jj]=d^k_\la(\ii,\jj)-z d^0_\la(\ii,\jj)$, where $d^k_\la(\ii,\jj)$ and $d^0_\la(\ii,\jj)$ are the determinants of two $n\times n$ matrices whose entries are polynomial in $\la$ of degree at most one and with coefficients in the set $\{0,\frac{1}{N},\frac{2}{N},\dots,1\}$. Consequently, $N^n W^k_\la(z)$ is a polynomial in $\la$ and $z$, of degree at most $n$ and $1$ respectively, and integer coefficients whose bit-size is at most $\nu:=n \bit(n)+ n\bit(N)+\bit(n+1)$ by Lemma \ref{basu2}. 
Let $\mathcal{P}$ and $\mathcal{Q}$ be, respectively, the sets of nonzero polynomials obtained as $$P(\la,z):=\det(N^n \dot{W}^k_\la(z))\quad \text{  and }\quad Q(\la,z):=S(N^n \dot{W}^k_\la(z)),$$ when $\dot{W}^k_\la(z)$ ranges over all possible square sub-matrices of $W^k_\la(z)$. 
By Theorem~\ref{thm_ss}, there exists a pair $(P,Q)\in \mathcal{P}\times \mathcal{Q}$ so that $Q(\la,z)\neq 0$ and \[\val\, W^k_\la(z)=P(\la,z)/(N^n Q(\la,z)),\]
where the normalization of the denominator is due to the fact that, for any square matrix $M$ of size $p\in\N$ and any $\al\in \R$, one has $\det(\al M)=\al^p\det(M)$ while $S(\al M)=\al^{p-1}S(M)$. 
We now show that $P$ and $Q$ satisfy the desired properties. First, $P$ is nonzero as $z\mapsto \val\, W^k_\la(z)$ is strictly decreasing by Theorem~\ref{thm_AOB1}. Second, the sub-matrices of $W^k_\la(z)$ are of size at most $d$ so that, by Lemma~\ref{basu2}, all the polynomials in $\mathcal{P}$ and $\mathcal{Q}$ are of degree at most $n d$ in $\la$ and $d$ in $z$, and their coefficients are integers. From Lemma~\ref{basu2}, one also obtains a bound for the bit-size of the coefficients of $P$, namely $d \nu+d\bit(d)+2 \bit(nd+1)$. Replacing $\nu$ in the last expression we an expression that be easily bounded by $3\,\varphi(n,d,N,0)$, which gives the desired result. 
\end{proof}

\medskip
We are now ready to prove the main result of this section. Again, we assume that $\la$ is a multiple of $1/M$ for some $M\in \N$. 
\begin{proposition}\label{estim2} The defining polynomials of $v^k_\la$ and $v^k$ are of degree at most $d$ and have coefficients of bit-size at most $4\varphi(n,d,N,M)$ and $4\varphi(n,d,N,0)$, respectively. 
  \end{proposition}
\begin{proof} 
We start by proving the result for the discounted case. Let $\la$ be such that $\la M\in \N$. Let $P\in \mathcal{P}$ and $Q\in \mathcal{Q}$ be the two polynomials given in Lemma~\ref{petit_lem} for $z=v_\la^k$. Hence, $Q(\la,v_\la^k)\neq 0$ and $\val\, W^k_\la(v^k_\la)=P(\la,v_\la^k)/Q(\la,v_\la^k)$. By Theorem~\ref{thm_AOB1}, 
$\val\, W^k_\la(v^k_\la)=0$, and consequently $P(\la,v_\la^k)=0$ by choice of $P$. Now, as $\la M\in \N$ and $P(\la,z)$ is a nonzero polynomial of degree at most $nd$ in $\la$ and $d$ in $z$ with integer coefficients, the following expression 
$$P_\la(z):=M^{n d} P(\la,z)\qquad \forall z\in \R,$$
defines a nonzero polynomial of degree at most $d$ with integer coefficients and satisfying $P_\la(v_\la^k)=0$. Consequently, it is a multiple of the defining polynomial of $v^k_\la$. In particular, $v_\la^k$ has algebraic degree at most $d$. To bound the bit-size of the coefficients of $P_\la$ it is enough to use the bound $3\varphi(n,d,N,0)$ obtained in Lemma~\ref{petit_lem} for $P$, and to bound the bit-size of its factors we use the Landau-Mignotte bound (see Theorem 2 in \cite{mignotte74}), which adds an additional term $d+\log(d+1)$ to the previous bound. \\[0.2cm]
Consider now the undiscounted case. As already argued in the discounted case, for each $\la\in(0,1]$ there exists a nonzero polynomial $P\in \mathcal{P}$ of degree at most $nd $ in $\la$ and $d$ in $z$ (the choice of the polynomial depends on $\la$), with integer coefficients of bit-size at most $3\varphi(n,d,N,0)$, and so that $P(\la,v_\la^k)=0$. By finiteness of the set $\mathcal{P}$, and because two polynomials cannot intersect infinitely many times in $(0,1]$, one of these polynomials must satisfy $P(\la,v_\la^k)=0$ for all $\la$ sufficiently small. For this polynomial, denoted again by $P(\la,z)$, 
let $P_0,\dots,P_{nd}$ be the unique polynomials in $z$ so that 
\[P(\la,z)=P_0+\la P_1(z)+\dots+\la^{nd} P_{nd }(z)=0\,.\]
As $P(\la,z)$ is nonzero, there exists $0\leq s\leq nd$ and $P_s\neq 0$ so that 
\[P(\la,z)=\la^s P_s(z)+o(\la^{s}),\] 
By construction, $P_s$ is a nonzero polynomial of degree at most $d$ and has integer coefficients of bit-size at most $3\varphi(n,d,N,0)$.
Dividing by $\la^s$, and letting $\la$ go to $0$, 
\[0=\lim_{\la \to 0} \frac{P(\la,v^k_\la)}{\la^s}=P_s(v^k)\,.\]
Hence, $P_s$ is a multiple of the defining polynomial of $v^k$. Like in the discounted case, we obtain the desired bound from the Landau-Mignotte bound. 
\end{proof}

\paragraph{Comments.} This result, which relies on Theorems \ref{thm_AOB1}, \ref{thm_AOB2} and \ref{thm_ss}, improves the bound provided by Hansen et al. \cite{HKMT11}. To see this, consider the case where $|I^\ell|=|J^\ell|=p$ for some $p\in \N$ and all $1\leq \ell\leq n$ and $\la N\in \N$. In this case, \cite{HKMT11} bounded the algebraic degree of $v_\la^k$ and $v^k$ by $(2p+5)^n$, while Proposition~\ref{estim2} reduces the bound to $d=p^n$. Furthermore, this bound is tight. 
Our result also reduces the bound on the bit-size of the coefficients obtained therein, from $22p^2 n^2(2p+5)^n\bit(N)$ to $4np^n(\bit(n)+ \bit(p)+\bit(N))$. 


\subsection{The distance between $v^k_\la$ and $v^k$}\label{thm3}
In this section we establish Theorem~3. First of all, recall the following classical bounds from Cauchy \cite{cauchy} and Mahler \cite{mahler64} concerning the roots of polynomial. 
\begin{lemme}\label{sep1} Let $P(z)=a_0+a_1 z \dots+a_p z^\ell$ be a non zero polynomial with integer coefficients, and let 
$\|P\|_\infty:=\max(|a_0|,\dots,|a_p|)$ and $\|P\|_2:=(\sum_{r=0}^p a^2_r)^{1/2}$. Then, 
\begin{itemize}
 \item [$(i)$] If $\al$ is a root of $P$ then $\al\geq \frac{1}{2 \|P\|_\infty}$. 
\item [$(ii)$] If $\be\neq \al$ is another root of $P$ then 
$|\al-\be|\geq  p^{-(p+2)/2} \|P\|^{1-p}_2$. 
\end{itemize}
\end{lemme} 
The following result is a direct consequence of Proposition \ref{estim2} and Lemma \ref{sep1} $(ii)$. 
\begin{lemme} \label{sepsep} Let $P$ be the defining polynomial of $v_\la^k$,  and let $\ep\leq 2^{-8 d\varphi(n,d,N,M)}$. 
Then $P$ has no root in the interval $(v_\la^k-\ep, v_\la^k+\ep)$. Similarly, let $Q$ be the defining polynomial of $v^k$ and let 
$\ep\leq 2^{-8 d\varphi(n,d,N,0)}$. Then $Q$ has no root in the interval $(v^k-\ep, v^k+\ep)$.
\end{lemme}
\begin{proof} Let us start by $v^k_\la$. By definition of the defining polynomial $P(v^k_\la)=0$. By Proposition \ref{estim2}, $P$ is of degree at most $d$ and its integer coefficients are bounded by $C:=2^{4\varphi(n,d,N,M)}$. Consequently, $\|P\|^2_2 \leq C^2(d+1)$ and, by Lemma \ref{sep1} $(ii)$, any other root $z$ of $P$ satisfies 
\begin{eqnarray*} |z-v_\la^k|&\geq &d^{-(d+2)/2} (d+1)^{(1-d)/2} C^{1-d}\\ &\geq &
2^{-8 d \varphi(n,d,N,M)}\,. \end{eqnarray*}
This inequality proves the statement for $v_\la^k$. We omit the proof for $v^k$ as it goes along the exact same lines: it is enough to replace $P$, $v^k_\la$ and $\varphi(n,d,N,M)$ with $Q$, $v^k$ and $\varphi(n,d,N,0)$.
\end{proof}

\bigskip 
Using Lemma~\ref{sep1} $(i)$, Lemma~\ref{petit_lem}, and Theorem~\ref{thm_ss}, we now derive some valuable insight on the asymptotic behavior of the sign of the map $\la\mapsto \val\, W^k_\la(z)$ as $\la$ goes to $0$, for a well-chosen fixed $z\in \R$. This result will be crucial in the proof of Theorem~3. 

\begin{proposition}\label{sign} For any $r\in \N$, set $Z_r:=\{0, \frac{1}{2^r},\dots, \frac{2^r}{2^r}\}$ and 
 $\la_r:=2^{-4\varphi(n,d,N,0)-rnd}$. Then, for each $z\in Z_r$, 
$$\begin{cases} 
\val\, W^k_{\la_r}(z)>0 \ \Longrightarrow \ F^k(z)\in [0,+\infty]\\ 
\val\, W^k_{\la_r}(z)<0 \ \Longrightarrow \ F^k(z)\in [-\infty,0]\\ 
\val\, W^k_{\la_r}(z)=0 \ \Longrightarrow \ F^k(z)= 0\,.\end{cases}
$$
 \end{proposition}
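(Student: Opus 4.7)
Fix $z_0 = m/2^r \in Z_r$ and set $f(\la) := \val\, W^k_\la(z_0)$, a continuous function of $\la \in (0,1]$ since the value of a matrix game depends continuously on its entries. Because $F^k(z_0) = \lim_{\la \to 0^+} \la^{-n} f(\la)$ and the multiplier $\la^{-n}$ is positive, the three implications will follow once I establish the dichotomy: on $(0, \la_r]$, $f$ is either of constant strict sign throughout, or identically zero on an initial sub-interval $(0, b]$ (and of some constant strict sign on $(b, \la_r]$). Indeed, $f \equiv 0$ on $(0, b]$ with $b > 0$ forces $F^k(z_0) = 0$, absorbing any sign of $f(\la_r)$; otherwise $\mathrm{sign}(f(\la_r))$ coincides with $\mathrm{sign}(F^k(z_0))$ (allowing $F^k(z_0) = 0$ when $f$ decays faster than $\la^n$).

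To make $f$ explicit I would invoke \textbf{Theorem 5}: at each $\la$, $f(\la) = \det \dot{W}^k_{\la, \alpha}(z_0) / S(\dot{W}^k_{\la, \alpha}(z_0))$ for some square sub-matrix indexed by $\alpha$. Since the combinatorial type of optimal supports of a matrix game takes finitely many values, $(0,1]$ admits a finite partition into intervals on each of which a single $\alpha$ is active. Any zero of $f$ on such a piece coincides with a zero in $\la$ of the polynomial $P_\alpha(\la, z_0) := \det(N^n \dot{W}^k_{\la, \alpha}(z_0))$.

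By Remark \ref{rem1}, each $P_\alpha(\la, z) \in \ZZ[\la, z]$ has degree at most $|I|n$ in each variable and integer coefficients of bit-size at most $f(|I|, n, N)$ as in \eqref{esti}. Substituting $z = m/2^r$ and multiplying by $2^{r|I|n}$ produces the univariate integer polynomial $\tilde{P}_\alpha(\la) \in \ZZ[\la]$ with coefficient bit-size at most
$$B := f(|I|, n, N) + r|I|n + \bit(|I|n + 1).$$
Routine estimates (using $\bit(n) \le n$, $\bit(|I|) \le |I|$, $\bit(|I|n+1) \le 2|I|n$, and $\bit(N) \ge 1$) verify $B + 1 \le 10|I|^2 n^2 \bit(N) r$, i.e., $2^{-B-1} \ge \la_r$. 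Cauchy's classical root-separation bound, applied to the reverse polynomial of $\tilde{P}_\alpha$, then guarantees that whenever $\tilde{P}_\alpha \not\equiv 0$ its smallest positive real root exceeds $2^{-B-1}$ strictly, hence exceeds $\la_r$.

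The dichotomy now follows by a sign-propagation argument: any zero of $f$ in $(0, \la_r]$ must occur on a piece where the active submatrix $\alpha_0$ satisfies $\tilde{P}_{\alpha_0} \equiv 0$, i.e., on a piece where $f$ vanishes identically. If such a zero-piece $[a, b]$ has left endpoint $a \in (0, \la_r]$, then continuity of $f$ at $a$ combined with the root bound applied to the adjacent piece $(a', a]$ forces its active polynomial to be $\equiv 0$ as well; iterating, every zero-piece meeting $(0, \la_r]$ extends all the way down to $\la = 0$. The main technical obstacle I foresee lies precisely in this piecewise bookkeeping across changes of active sub-matrix, together with a careful verification of the numerical inequality $B + 1 \le 10|I|^2 n^2 \bit(N) r$ in the small-parameter regime.
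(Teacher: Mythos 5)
Your proposal is correct and follows essentially the same route as the paper's proof: invoke \textbf{Theorem 5} to realise $\la\mapsto \val\, W^k_\la(z)$ piecewise as one of finitely many rational fractions whose numerators, after substituting $z=m/2^r$ and clearing denominators by $2^{rn|I|}$, are integer polynomials in $\la$ with coefficient bit-size controlled via Remark \ref{rem1}, then apply the Cauchy-type root bound to exclude sign changes on $(0,2^{-c_r}]\supseteq(0,\la_r]$ and finish with the numerical comparison of exponents. The only difference is presentational: you make the piecewise bookkeeping (the ``sign-propagation'' across changes of active Shapley--Snow kernel) explicit, whereas the paper compresses it into the observation that a sign change at $\al$ forces $P_z(\al)=0$ for some numerator in the finite family.
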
 
\begin{proof} Let $z\in Z_r$ be fixed. 
Let $\mathcal{P}$ and $\mathcal{Q}$ be the set of polynomials of Lemma~\ref{petit_lem}. 
Hence, for all $P\in\mathcal{P}$, the polynomial $P(\la,z)$ is of degree at most $nd$ in $\la$ and $d$ in $z$. Furthermore, by the choice of $z$, \[P_z(\la):=2^{r n d} P(\la,z)\qquad \la\in(0,1]\] 
defines a polynomial in the variable $\la$ of degree at most $nd$ and with integer coefficients of bit-size at most $3\varphi(n,d,N,0)+rnd+1$. Let $\mathcal{P}(z)$ and $\mathcal{Q}(z)$ be the set of all the polynomials obtained this way, as $P$ and $Q$ range, respectively, in the sets $\mathcal{P}$ and $\mathcal{Q}$. 
By Theorem~\ref{thm_ss}, for any $\la\in(0,1]$ there exists $P_z\in \mathcal{P}(z)$ and $Q_z\in \mathcal{Q}(z)$, the choice of the  polynomials depends on $\la$, so that
\[\val\, W^k_\la(z)=\frac{P_z(\la)}{Q_z(\la)}\,.\]
Hence, a necessary condition for the function $\la\mapsto \val\, W^k_\la(z)$ to change sign at some $\al\in \R$ is that $P_z(\al)=0$ for some polynomial $P_z\in \mathcal{P}(z)$. Applying Lemma~\ref{sep1} $(i)$ to 
the nonzero polynomials in $\mathcal{P}(z)$, it follows that neither of them admits a root in the interval 
$\left(0, \la_r\right]$. 
In other words, 
the sign of $\la\mapsto \val\, W^k_\la(z)$ is constant in the interval $(0,\la_r]$. Consider now the three possible cases, $\val\, W^k_{\la_r}(z)>0$, $\val\, W^k_{\la_r}(z)<0$, and $\val\, W^k_{\la_r}(z)=0$. In the first case, 
$\la^{-n}\, \val\,W^k_\la(z)>0$ for all $\la\in(0,\la_r]$ so that 
\[F^k(z):=\lim_{\la \to 0}\la^{-n}\, \val\, W^k_\la(z)\in [0,+\infty]\,.\]
The second case is similar. For the third, $\val\, W^k_{\la_r}(z)=0$ implies that $\val\, W^k_{\la}(z)=0$ for all $\la\in(0,\la_r]$  so that 
one also has $F^k(z)=0$. 
  \end{proof}
  
\bigskip 
We are now ready to establish Theorem~3, whose statement is as follows.\\ \emph{For each $r\in \N$, let $\la_r:=2^{-4\varphi(n,d,N,0)-rnd}$. Then $|v^k_\la-v^k|\leq 2^{-r}$ for all $\la\in(0,\la_r)$.}
\paragraph{Proof of Theorem~3.} 
 Let $\la\in(0,\la_r]$ be fixed. First of all, the maps $z\mapsto \val\, W_{\la}^k(z)$ and $z\mapsto F^k(z)$ are strictly decreasing, by Theorems \ref{thm_AOB1} and \ref{thm_AOB2},  
 respectively. 
Therefore, either there exists a unique $z\in Z_r=\{0, \frac{1}{2^r},\dots, \frac{2^r}{2^r}\}$ so that $\val\, W_{\la}^k(z)=0$, or 
there exists $0\leq m\leq 2^r$ such that $\val\, W_{\la}^k(m 2^{-r})>0$ and $\val\, W_{\la}^k((m+1) 2^{-r})<0$, and the same is true for $F^k$. Consider the first case, and let $z\in Z_r$ satisfy $\val\, W_{\la}^k(z)=0$. By Theorem~\ref{thm_AOB1}, this implies $z=v_{\la}^k$, so that, by Proposition~\ref{sign}, one also has 
 $F^k(z)=0$. But then, Theorem~\ref{thm_AOB2} implies $v^k=z$ so that $v_{\la}^k=v^k$, and the inequality $|v_\la^k-v^k|\leq 2^{-r}$ holds. Consider now the second case, and let $1\leq m\leq 2^r$ be such that  $\val\, W_{\la}^k(m 2^{-r})>0$ and $\val\, W_{\la}^k((m+1) 2^{-r})<0$. On the one hand, Theorem~\ref{thm_AOB1} implies 
\begin{equation}\label{eq1}m2^{-r}< v_{\la}^k < (m+1)2^{-r}\,.\end{equation} 
On the other, Proposition~\ref{sign} gives $F^k(m 2^{-r})\geq 0$ and $F^k((m+1) 2^{-r})\leq 0$ which, in view of Theorem~\ref{thm_AOB2}, implies 
\begin{equation}\label{eq2}
m2^{-r}\leq v^k \leq (m+1)2^{-r}\,.\end{equation} 
The combination of \eqref{eq1} and \eqref{eq2} yields the desired inequality $|v_\la^k-v^k|\leq 2^{-r}$. \hfill $\square$

\section{Algorithms}\label{sec_algo}
The aim of this section is to describe the algorithms that correspond to Theorems 1 and 2. 
We start by recalling three classical algorithms in Section~\ref{aux} that are called by the above-mentioned algorithms. Section~\ref{sec3} is devoted to the description of two algorithms: the first one outputs arbitrarily close approximations for the discounted values of a stochastic game, while the second one outputs an exact expression for this value. The latter corresponds to the algorithm of Theorem~1. Similarly, Section~\ref{sec3} is devoted to the description of three algorithms: the first two output arbitrarily close approximations for the value of a stochastic game, while the third one outputs anexact expression for this value.  The latter corresponds to the algorithm of Theorem~2. 

\subsection{Auxiliary results} \label{aux}
Recall that the complexity of the algorithms is measured with the logarithmic cost model. The logarithmic cost of an algorithm can be bounded by 1) a bound of the number of arithmetic operations that it requires, and 2) a bound the bit-size of the numbers that are involved in them. In particular, if these two bounds are polynomial expressions in some variables $t_1,\dots,t_m$, so is the logarithmic cost of the algorithm. 

We now recall three well-known algorithms. 
The first one, due to Kannan, Lenstra and Lov\'asz \cite{KLL88}, allows to compute the defining polynomial of an algebraic number efficiently. It will be referred as the \emph{KLL algorithm}. The second, due to Karmarkar \cite{karmarkar84}, allows to solve linear programs efficiently and will be referred to as the \emph{Karmarkar algorithm}. The third one, which allows to compute the determinant of a square matrix efficiently, is taken from Basu, Pollack and Roy \cite{basu07}, where it is referred to as the \emph{Dodgson-Jordan-Bareiss algorithm}.  

\begin{theoreme}\label{KLL} 
Let $\al\in \R$ be an algebraic number of degree $p\in \N$ and defining polynomial $P(z)=a_0+a_1z+\dots+a_p z^p$, $z\in \R$. 
 The \emph{KLL algorithm} outputs the defining polynomial of $\al\in \R$ when given as inputs $(D, C, \bar{\al})$ satisfying
\begin{itemize}
\item $B\in \N$ is a bound on the algebraic degree of $\al$, i.e. $p\leq D$.
\item $2^C\in \N$ bounds the integer coefficients $|a_0|,\dots, |a_p|$. 
\item $\bar{\al}\in \QQ$ so that $|\al-\bar{\al}|\leq 2^{-s}\frac{1}{12D}$, where 
$$s=s(b,C):=\lceil D^2/2+(3D+4)\log_2(D+1)+2 D C\rceil\,.$$
\end{itemize}
This algorithm 
 requires $O(p D^4(D+C))$ arithmetic operations on integers of bit-size $O(D^2(D+ C))$.
 \end{theoreme} 

\begin{theoreme}\label{Karma} Let $M$ be a $p\times q$ matrix with rational entries which can be encoded in $C$ bits. The \emph{Karmarkar algorithm} 
inputs $M$ and outputs its value $\val\, M$, and requires $O(p^{3.5} C)$ arithmetic operations on integers of bit-size at most $O(C)$. 
\end{theoreme}  

\begin{theoreme}\label{DJB} Let $M$ be a $p\times p$ matrix with integer entries of bit-size $C$. 
 The \emph{Dodgson-Jordan-Bareiss algorithm} inputs $M$ and outputs $\det \, M$, and requires $O(p^3)$ arithmetic operations on integers of bit-size $O(p\, \bit(p) C)$. \end{theoreme}

 For the three above-mentioned algorithms, the following assertions hold. 
\begin{itemize}
\item The \emph{KLL algorithm} is polynomial in $D$ and $C$, the bounds for the degree and the bit-size of the coefficients, respectively, of the defining polynomial of $\al$. 
\item The \emph{Karmarkar algorithm} and the \emph{Dogson-Jordan-Bareiss algorithm} are polynomial in the size of the matrix and in the bit-size of its entries. 
\end{itemize}


\subsection{Computing the discounted values}\label{sec3}
We start by describing a bisection algorithm, directly derived from Theorem~\ref{thm_AOB1}, that outputs arbitrarily close approximations of the $\la$-discounted value $v_\la^k$ of a stochastic game $(K,I,J,g,q,k)$. As we will show later on (see \emph{Algorithm 2 approx bis}), this algorithm can also be used to obtain arbitrarily close approximations of $v^k$, thanks to Theorem~3. 

\bigskip
\hrule  
\smallskip
\noindent \textbf{\emph{Algorithm 1 approx}}
\smallskip
\hrule 
\bigskip

\noindent \textbf{Input:} A stochastic game $(K,I,J,g,q,k)$ satisfying $(H_N)$ for some $N\in \N$, a discount rate $\la$ satisfying $\la M \in \N$ for some $M\in \N$, and a precision level $r\in \N$.\\[0.3cm]%
\textbf{Output:} A rational number $u\in \{0,\frac{1}{2^r},\dots,\frac{2^{r}-1}{2^r}\}$ so that $v^k_\la\in [u, u+\frac{1}{2^{r}}]$.\\[0.3cm] 
\textbf{Computation cost:} Polynomial in $n$, $|I|$, $|J|$, $\log N$, $\log M$ and $r$. \\[0.1cm] 

\noindent $1$. Set $\underline{w}:=0$, $\overline{w}:=1$\\[0.15cm] 
\noindent $2.$ WHILE $\overline{w}-\underline{w}>2^{-r}$ DO  
\begin{enumerate}
\item[$2.1$] $z:= \frac{\underline{w}+\overline{w}}{2}$
\item[$2.2$] Compute $W^k_{\la}(z)$ 
\item[$2.3$] Compute $v:= \mathrm{val}\, W^k_{\la}(z)$
\item[$2.4$] IF $v\geq 0$, THEN $\underline{w}:=z$ 
\item[$2.5$] IF $v \leq 0$ THEN $\overline{w}:=z$
\end{enumerate}
\noindent $3.$ RETURN $u:= \underline{w}$\,.  \\[0.15cm] \hrule 
\bigskip


\paragraph{Computation cost of \emph{Algorithm 1 approx}.} 
By Theorem~\ref{thm_AOB1}, each iteration of Step 2 reduces the interval $[\underline{w},\overline{w}]$ by a factor of $1/2$, while satisfying $\underline{w}\leq v^k_\la\leq \overline{w}$. Consequently, the algorithm terminates after at most $r$ steps, and the output $u$ satisfies $v^k_\la\in [u, u+\frac{1}{2^r}]$. As Steps 2.1, 2.4 and 2.5 require one operation each, the computation cost of the algorithm depends essentially on the computation cost of Steps 2.2 and 2.3, which is the object of the following lemma.

\begin{lemme}\label{estim} Let $L=O(n \bit(n)\bit(N)\bit(M))$. For all $r\in \N$ and $z\in\{0,\frac{1}{2^r},\dots,\frac{2^{r}-1}{2^r}\}$, 
\begin{itemize} 
\item[$(i)$]  
The computation of $W^k_\la(z)$ with the \emph{Dogson-Jordan-Bareiss algorithm}   
requires
$O(n^3|I||J|)$ arithmetic operations, with numbers of bit-size $O(L+r)$.
\item[$(ii)$]  The computation of $\val\, W^k_\la(z)$ with the \emph{Karmarkar algorithm} requires   
$O(d^{3.5}(L+r))$ arithmetic operations, with numbers of bit-size $O(L+r)$.
\end{itemize}
\end{lemme}
\begin{proof}
Let $r\in \N$ and $z\in \{0,\frac{1}{2^r},\dots,\frac{2^{r}-1}{2^r}\}$ be fixed. \\[0.1cm]
$(i)$ Thanks to the assumptions $(H_N)$ and $\la M\in \N$, and by the definition of $W^k_\la(z)$, for each $(\ii,\jj)\in I\times J$, $W^k_\la(z)[\ii,\jj]=d_\la^0(\ii,\jj)-z d_\la^0(\ii,\jj)$ where $d_\la^0(\ii,\jj)$ and $d_\la^k(\ii,\jj)$ are determinants of some $n\times n$ matrices whise entries are multiples of $\frac{1}{NM}$. 
Multiplying each entry by $NM$, so that all entries are integers, it follows then from Theorem~\ref{DJB} that 
$d_\la^0(\ii,\jj)$ and $d_\la^0(\ii,\jj)$ can be computed in $O(n^3)$ arithmetic operations on integers of bit-size 
$O(n\bit(n) \bit(NM))= 
O(L)$. The entries of $W^k_\la(z)$ are then of bit-size at most $O(L+r)$ because, by the choice of $z$, $\bit(z)=O(r)$. 
Finally, the total number of operations is simply $O(n^3 |I| |J|)$ because the matrix $W^k_\la(z)$ is of size $|I|\times |J|$. \\[0.1cm] 
$(ii)$ As already noted in the poof of $(i)$, the entries $W^k_\la(z)$ are of bit-size at most $O(L+r)$. The result follows then directly from Theorem~\ref{Karma}. 
%
\end{proof}

\bigskip 
The next result is a direct consequence from Lemma~\ref{estim}. 

\begin{theoreme}\label{approx} 
\emph{Algorithm 1 approx} computes a $2^{-r}$-approximation of $v^k_\la$ for any $r\in \N$, and its computation cost is polynomial in $n$, $|I|$, $|J|$, $\log N$, $\log M$ and $r$. 
\end{theoreme} 


Next, we combine \emph{Algorithm 1 approx} and the \emph{KLL algorithm} in order to obtain an exact expression for $v^k_\la$. 


\bigskip 

\hrule  
\smallskip
\noindent \textbf{\emph{Algorithm 1 exact}}
\smallskip
\hrule 
\bigskip

\noindent \textbf{Input:} A stochastic game $(K,I,J,g,q,k)$ satisfying $(H_N)$ for some $N\in \N$, and a discount rate $\la\in(0,1]$ so that $\la M\in\N$ for some $M\in \N$. \\[0.3cm]%
\textbf{Output:} An exact expression for $v_\la^k$. \\[0.3cm] 
\textbf{Computation cost:} Polynomial in $n$, $|I|$, $|J|$, $\log N$ and $\log M$.\\[0.1cm] 

\noindent $1$. Initialization phase
\begin{enumerate}
\item[$1.1$]Set $C:=4\varphi(n,d,N,M)$ 
\item[$1.2$] Set $s:=\lceil d^2/2+(3d+4)\log_2(d+1)+2 d  C\rceil$
\item[$1.3$] Set $r:= s \lceil \log_2 12 d \rceil $
\end{enumerate}
\noindent $2$. 
Run \emph{Algorithm 1 approx} with inputs $(K,I,J,g,q,k)$, the discount rate $\la$, and a precision level $r$. Denote its output by $u$.\\[0.25cm] 
\noindent $3$. Run the \emph{KLL algorithm} with inputs $d$, $C$ and $u$, and output $P$\,.\\[0.25cm]
\noindent $4$. RETURN $(P; u, u+2^{-r})$\,. 
 \bigskip 
 \hrule 
\bigskip
\bigskip

We are now ready to prove Theorem~1. That is, that \emph{Algorithm 1 exact} computes an exact expression for $v^k_\la$, and that its computation cost is polynomial in $n$, $|I|$, $|J|$, $\log N$ and $\log M$. 

\paragraph{Proof of Theorem~1.} 
First, recall that the algebraic degree of $v^k_\la$, and the bit-size of the coefficients of its defining polynomial, are bounded by $d$ and $C$ respectively, by Proposition~\ref{estim2}. 
Second, by Theorem~\ref{approx}, Step 2 of \emph{Algorithm 1 exact} returns $u$ so that $\bit(u)\leq 2r$ and 
$|u -v_{\la}^k|\leq 2^{-r}$, and the computation cost is polynomial in $n$, $|I|$, $|J|$, $\log N$, $\log M$ and $r$. 
Third, by  Theorem~\ref{KLL}, the definition of $C$, $r$ and $s$ in Step 1 of \emph{Algorithm 1 exact} ensure that Step 3 of \emph{Algorithm 1 exact} provides the defining polynomial $P$ of $v_\la^k$, and that the computation cost is polynomial in $d$ and $C$. As $C$ and $r$ are (bounded by) polynomial expressions in $n$, $d$ and $\log N$, the entire algorithm is thus polynomial in $n$, $|I|$, $|J|$, $\log N$ and $\log M$. It remains to show that $P$ has no other root than $v^k_\la$ in the interval $(u, u+2^{-r})$ so that $(P; u, u+2^{-r})$ is an exact expression for $v^k_\la$. 
To see this, note that by definition one has $r\geq 8 d\varphi(n,d,N,M)$. By Lemma \ref{sepsep}, this implies that $P$ has no other root in the interval $(v_\la^k-2^{-r}, v_\la^k+2^{-r})$, and the result follows because this interval contains $(u, u+2^{-r})$ thanks to $|u -v_{\la}^k|\leq 2^{-r}$.  \hfill $\square$ 

\subsection{Computing the value}\label{sec4}
Like for the discounted case, we start by proposing a bisection algorithm, directly derived from Theorem~\ref{thm_AOB2}, which outputs arbitrarily close approximations of the value $v^k$ of a stochastic game $(K,I,J,g,q,k)$. Note, however, that the natural algorithm would consist in computing the sign of $F^k(z):=\lim_{\la\to 0} \la^{-n}\val\, W^k_\la(z)$ at each iteration, but this computation seems very costly. 
Luckily, there is a way out to this issue. Indeed, by Proposition~\ref{sign}, this computation is equivalent to that of the sign of $\val\, W^k_\la(z)$, for a well-chosen $\la$, and this can be done efficiently because it is a linear program (provided that the bit-size of $\la$ is polynomial). The following bisection algorithm is built upon this observation. 

\bigskip
\hrule  
\smallskip
\noindent \textbf{\emph{Algorithm 2 approx}}
\smallskip
\hrule 
\bigskip

\noindent \textbf{Input:} A stochastic game $(K,I,J,g,q,k)$ satisfying $(H_N)$ for some $N\in \N$, and a precision level $r\in \N$.\\[0.3cm]%
\textbf{Output:} A rational number $u\in \{0,\frac{1}{2^r},\dots,\frac{2^r-1}{2^r}\}$ so that $v^k\in [u, u+\frac{1}{2^{r}}]$.\\[0.3cm] 
\textbf{Computation cost:} Polynomial in $n$, $|I|$, $|J|$, $\log N$ and $r$.\\[0.1cm] 

\noindent $1.1$. Set $\la_r:=2^{-4\varphi(n,d,N,0)-rnd}$ \\[0.15cm] 
\noindent $1.2$. Set $\underline{w}:=0$, $\overline{w}:=1$\\[0.15cm] 
\noindent $2.$ WHILE $\overline{w}-\underline{w}>2^{-r}$ DO 
\begin{enumerate}
\item[$2.1$] $z:= \frac{\underline{w}+\overline{w}}{2}$
\item[$2.2$] Compute $W^k_{\la_r}(z)$ 
\item[$2.3$] Compute $v:=\val\, W^k_{\la_r}(z)$
\item[$2.4$] IF $v\geq 0$, THEN $\underline{w}:=z$ 
\item[$2.5$] IF $v \leq 0$ THEN $\overline{w}:=z$
\end{enumerate}
\noindent $3.$ RETURN $u:= \underline{w}$\,. \\[0.15cm] \hrule 
\bigskip
\bigskip

The next result is a direct consequence of Proposition~\ref{sign}, Lemma~\ref{estim} and the definition of $\la_r$. 

\begin{theoreme}\label{approx2} \emph{Algorithm 2 approx} computes a $2^{-r}$-approximation of $v^k$ for any $r\in \N$, and its computation cost is polynomial in $n$, $|I|$, $|J|$, $\log N$ and $r$. 
\end{theoreme} 
\begin{proof} By Proposition~\ref{sign}, the sign of $W^k_{\la_r}(z)$ coincides with the sign of $F^k(z)=\lim_{\la\to 0} \la^{-n}\val\, W^k_\la(z)$ at every $z$ that is called by the algorithm. It follows then from Theorem~\ref{thm_AOB2} that \emph{Algorithm 2 approx} provides a $2^{-r}$-approximation of $v^k$. 
By Lemma~\ref{estim}, its computation cost is polynomial in $n$, $d$, $\bit(N)$ and $\bit(\la_r)$. The result follows then from the fact that the bit-size of $\la_r$ is polynomial in $n$, $d$, $\log N$ and $r$.
\end{proof}

\bigskip 

Alternatively, one can use Theorem~3 to obtain arbitrary close approximation for $v^k$ directly from \emph{Algorithm 1 approx}, as follows. 

\bigskip
\hrule  
\smallskip
\noindent \textbf{\emph{Algorithm 2 approx bis}}
\smallskip
\hrule 
\bigskip

\noindent \textbf{Input:} A stochastic game $(K,I,J,g,q,k)$ satisfying $(H_N)$ for some $N\in \N$, and a precision level $r\in \N$. \\[0.3cm]%
\textbf{Output:} A rational number $u\in \{0,\frac{1}{2^r},\dots,\frac{2^r-1}{2^r}\}$ so that $v^k\in [u, u+\frac{1}{2^{r}}]$.\\[0.3cm] 
\textbf{Computation cost:} Polynomial in $n$, $|I|$, $|J|$, $\log N$ and $r$.\\[0.1cm] 

\noindent $1$. Set $\la_{r+1}:=2^{-4\varphi(n,d,N,0)-(r+1)nd}$. \\[0.15cm] 
\noindent $2$. Run \emph{Algorithm 1 approx} with inputs the stochastic game $(K,I,J,g,q,k)$, the discount rate $\la_{r+1}$ and the precision level $r+1$. Let $u$ denote its output.\\[0.15cm] 
\noindent $3.$ RETURN $u$\,. \\[0.15cm] \hrule 
\bigskip


\begin{theoreme}\label{approx2} \emph{Algorithm 2 approx bis} computes a $2^{-r}$-approximation of $v^k$ for any $r\in \N$, and its computation cost is polynomial in $n$, $|I|$, $|J|$, $\log N$ and $r$. 
\end{theoreme} 
  \begin{proof} By Theorem~\ref{approx}, Step 2 of \emph{Algorithm 2 approx bis} outputs $u$ so that $|u-v^k_{\la_{r+1}}|\leq \frac{1}{2^{r+1}}$, and the cost is polynomial in $n$, $|I|$, $|J|$, $\log N$, $r$ and $\bit(\la_r)$. The latter being polynomial $n$, $|I|$, $|J|$, $\log N$ and $r$, the cost is thus polynomial in these variables too. Finally, by the choice of $\la_{r+1}$, Theorem~3 implies
  $|v^k-v^k_{\la_{r+1}}|\leq \frac{1}{2^{r+1}}$. The result follows, since 
  $$|u-v^k|\leq |u-v^k_{\la_{r+1}}|+ |v^k-v^k_{\la_{r+1}}|\leq \frac{1}{2^{r}}\,.$$ 
\end{proof}
 
 \bigskip  
Like in the discounted case, one can now combine \emph{Algorithm 2 approx} (or \emph{Algorithm 2 approx bis}) with the \emph{KLL algorithm} to obtain an algorithm that outputs an exact expression for $v^k$. 
The algorithm goes as follows. 

\bigskip 

\hrule  
\smallskip
\noindent \textbf{\emph{Algorithm 2 exact}}
\smallskip
\hrule 
\bigskip

\noindent \textbf{Input:} A finite stochastic game $(K,I,J,g,q,k)$ satisfying $(H_N)$ for some $N\in \N$. \\[0.3cm]%
\noindent \textbf{Output:} An exact expression for $v^k$.\\[0.3cm]  
\textbf{Computation cost:} Polynomial in $n$, $|I|$, $|J|$ and $\log N$.\\[0.1cm]

\noindent $1$. Initialization phase.
\begin{enumerate}
\item[$1.1$]Set $C:=4\varphi(n,d,N,0)$. 
\item[$1.2$] Set $s:=\lceil d^2/2+(3d+4)\log_2(d+1)+2 d C\rceil$.
\item[$1.3$] Set $r:= s \lceil \log_2 12 d \rceil $.
\end{enumerate}
\noindent $2$. Run \emph{Algorithm 2 approx bis} with inputs the stochastic game $(K,I,J,g,q,k)$ and the precision level $r+1$. Denote its output by $u$. \\[0.25cm] 
\noindent $3$. Run the \emph{KKL algorithm} with inputs $d$, $C$ and $u$. Denote its output by $Q$. \\[0.25cm]
\noindent $4$. RETURN $(Q; u, u+2^{-r})$\,. 
  \bigskip 
 \hrule 
\bigskip
\bigskip

We are now ready to prove Theorem~2, that is: \emph{Algorithm 2 exact} computes an exact expression for $v^k$, and its computation cost is polynomial in $n$, $|I|$, $|J|$ and $\log N$. The proof is similar to that of 
Theorem~1.

\paragraph{Proof of Theorem~2.} 
 First, recall that the algebraic degree of $v^k$, and the bit-size of the coefficients of its defining polynomial, are bounded by $d$ and $C$ respectively, by Proposition~\ref{estim2}. 
Second, by Theorem~\ref{approx2}, Step 2 of \emph{Algorithm 2 exact} returns $u$ so that $\bit(u)\leq 2r$ and 
$|u -v^k|\leq 2^{-r}$, and the computation cost is polynomial in $n$, $|I|$, $|J|$, $\log N$, and $r$. 
Third, by  Theorem~\ref{KLL}, the definition of $C$, $r$ and $s$ in Step 1 of \emph{Algorithm 1 exact} ensure that Step 3 of \emph{Algorithm 2 exact} provides the defining polynomial $Q$ of $v^k$, and that the computation cost is polynomial in $d$ and $C$. As $C$ and $r$ are (bounded by) polynomial expressions in $n$, $d$ and $\log N$, the entire algorithm is thus polynomial in $n$, $|I|$, $|J|$ and $\log N$. It remains to show that $Q$ has no other root than $v^k$ in the interval $(u, u+2^{-r})$ so that $(Q; u, u+2^{-r})$ is an exact expression for $v^k$. 
To see this, note that by definition one has $r\geq 8 d\varphi(n,d,N,0)$. By Lemma \ref{sepsep}, this implies that $Q$ has no other root in the interval $(v^k-2^{-r}, v^k+2^{-r})$, and the result follows because this interval contains $(u, u+2^{-r})$ thanks to $|u -v^k|\leq 2^{-r}$. \hfill $\square$ 

\section*{Acknowledgements} 
I am very much indebted to Krishnendu Chatterjee for his useful comments and time, and to Kristoffer Hansen for his insight and advice. I am also thankful to the comments of the anonymous referees of the journal, which have greatly contributed in the presentation and organization of the results. 
Finally, I gratefully acknowledge the support of the French National Research Agency, under
grant ANR CIGNE (ANR-15-CE38-0007-01), and the support of the Cowles Foundation at Yale University.

\bibliographystyle{amsplain}
\bibliography{bibliothese3}

\end{document}